\documentclass[12pt]{amsart}
\pdfoutput=1
\usepackage[margin=1in]{geometry}
\usepackage{amsmath,amsthm,amssymb,amsrefs,bbm,color,enumitem,esint,esvect,float,graphicx,mathrsfs}
\addtolength{\topmargin}{-.4in} 

\usepackage{euscript,latexsym}
\usepackage{accents, xcolor}

\usepackage{epstopdf}
\epstopdfDeclareGraphicsRule{.tif}{png}{.png}{convert #1 \OutputFile}
\AppendGraphicsExtensions{.tif}

\usepackage{bbm}

\usepackage{hyperref}
\hypersetup{colorlinks}

\newcommand{\R}{\mathbb{R}}

\newcommand{\N}{\mathbb{N}}
\newcommand{\Z}{\mathbb{Z}}

%shortcutnotations

\newcommand{\D}{\mathcal{D}}

\newcommand{\BMO}{\mathrm{BMO}}
\newcommand{\Hone}{\mathrm{H}^1}
\newcommand{\Lip}{\Lambda_q(\alpha)}
\newcommand{\cexp}{\mathsf{E}}
\newcommand{\diff}{\mathsf{D}}

\newtheorem{thm}{Theorem}[section]
\newtheorem{lemma}[thm]{Lemma}

\newtheorem{prop}[thm]{Proposition}
\newtheorem{rem}[thm]{Remark}

\newtheorem{open}[thm]{Open Question}

%%%%%%%%%%%%%%%%%%%%%%%%%%%%%%%%%%%%%%%%%%%%%%%%%%%%%%%%%%%%%%%%%%
%%%%%%%%%%%% Theorems with Latin letter as markers %%%%%%%%%%%%%%%%%%%%%%%%%%%%%%%%%%%%%%%%%%%%%%%%%%%%%%%%%%%%%%%%%%

%\theoremstyle{theorem}
\newtheorem{ltheorem}{Theorem}

\numberwithin{equation}{section}

\def\one{\mbox{1\hspace{-4.25pt}\fontsize{12}{14.4}\selectfont\textrm{1}}}

\theoremstyle{definition}

\begin{document}

\title[Endpoint Estimates]{Endpoint Estimates for Haar Shift Operators with Balanced Measures}
\author{Jos\'{e} M. Conde Alonso}
\address{Jos\'{e} M. Conde Alonso \hfill\break\indent 
 Departamento de Matem\'aticas \hfill\break\indent 
 Universidad Aut\'onoma de Madrid \hfill\break\indent 
 C/ Francisco Tom\'as y Valiente sn\hfill\break\indent 
 28049 Madrid, Spain}
\email{jose.conde@uam.es}
\author[N.A. Wagner]{Nathan A. Wagner}
\thanks{J. M. Conde-Alonso was supported by grant \texttt{CNS2022-135431} (Agencia Estatal de Investigaci\'on, Spain). N. A. Wagner was supported in part by National Science Foundation grant DMS \texttt{220327}2.}

\address{Nathan A. Wagner \hfill\break\indent 
 Department of Mathematics \hfill\break\indent 
Brown University \hfill\break\indent 151 Thayer St \hfill\break\indent 
Providence, RI, 02912 USA}
\email{nathan\_wagner@brown.edu}

%\date{\today}

\subjclass[2010]{}%

\keywords{}%

\begin{abstract} 
We prove $\Hone$ and $\BMO$ endpoint inequalities for generic cancellative Haar shifts defined with respect to a possibly non-homogeneous Borel measure $\mu$ satisfying a weak regularity condition. This immediately yields a new, highly streamlined proof of the $L^p$-results for the same operators due to L\'opez-Sanchez, Martell, and Parcet \cite{LSMP}. We also prove regularity properties for the Haar shift operators on the natural martingale Lipschitz spaces defined with respect to the underlying dyadic system, and show that the class of measures that we consider is sharp.
\end{abstract}

\maketitle
\section*{Introduction}

Let $\D$ be a dyadic system on $\R^n$. By that we mean a family
$$
\D = \bigcup_{k\in\Z} \D_k = \bigcup_{k\in\Z, j \in \Z^n} \{Q_{jk}\},
$$
where each $Q_{jk}$ is a half open cube with sides parallel to the axes and satisfying the following properties:
\begin{itemize}
    \item The side length $\ell(Q_{jk})$ of $Q_{jk}$ is $2^{-k}$.
    \item The $k$-th generation $\D_k = \cup_{j \in \Z^n} \{Q_{jk}\}$ is a partition of $\R^n$.
    \item Any two cubes in $\D$ are either disjoint or nested.
\end{itemize}
Given $r>0$ and $Q\in\D$, we denote by $\D_r(Q)$ the family of cubes $R\in\D$ such that $R\subset Q$ and $\ell(Q) = 2^r \ell(R)$. To each $Q\in \D$, one can associate up to $2^n-1$ Haar functions $\{h_Q^j\}_{1\leq j < 2^{n}}$ supported on $Q$, which satisfy the following: they have integral $0$, $h_Q^j$ is constant over each $R\in \D_1(Q)$ and the family $\{h_Q^j\}_{Q\in\D,\,j}$ is an orthonormal basis of $L^2(\R^n)$. A (cancellative) Haar shift operator of complexity $(r,s) \in \N\times\N$ is an operator of the form
$$
T^{r,s} f(x)= \sum_{Q \in \D}\sum_{\substack{R\in\D_r(Q) \\ S\in \D_s(Q)}} \sum_{j,j'}\alpha_{R,S}^Q \langle f, h_{R}^j \rangle h_{S}^{j'}(x). 
$$
Haar shifts are some of the most succesful discrete models of Calder\'on-Zygmund operators. Indeed, the groundbreaking result of Petermichl \cite{Petermichl2000} allows one to write the Hilbert transform
$$
\mathcal{H}f(x) := \mathrm{p.v.} \int_\R \frac{f(y)}{x-y} dy
$$
as an average of the same dyadic shift defined over dilations and translations of the usual dyadic grid. Said shift is defined by
$$
\mathcal{H}_\D f(x) := \sum_{I \in \D} \langle f, h_{I} \rangle \left(h_{I_-}(x)-h_{I_+}(x)\right),
$$
with respect to each dyadic system $\D$. Above $I_-$ and $I_+$ are the two elements of $\D_1(I)$. A higher dimensional, remarkable generalization is available: any Calder\'on-Zygmund operator on $\R^n$ can be represented as an appropriate average of Haar shift operators, possibly more complicated than $\mathcal{H}_\D$, see \cite{Hyt2012}. These representations motivate the detailed study of the simpler models $T^{r,s}$, especially in what pertains their precise quantitative behavior (for example, in terms of weights).

The classical Calder\'on-Zygmund techniques can be applied to study Haar shift operators. The $L^2$-theory is significantly simpler than that of singular integral operators: indeed, the orthogonality of the Haar system implies
$$
\left\|T^{r,s}\right\|_{L^2(\R^n) \to L^2(\R^n)} \lesssim_{r,s,n} \left\|\left\{ \alpha_{R,S}^Q\right\}\right\|_{\ell^\infty},
$$
and the boundedness of the coefficients $\alpha_{R,S}^Q$ is necessary in general. $L^2$-bounded Haar shifts can then be shown to map $L^p(\R^n)$ onto itself and to be of weak-type $(1,1)$. 

There are numerous occasions when one may need to consider a measure on $\R^n$ different from the Lebesgue measure. The very definition of Haar shifts is affected by any change on $\mu$, because the orthogonality of the elements of the Haar basis must account for it. In the one-dimensional case, where up to sign there is only one possible choice of Haar basis element per dyadic interval, for each $I\in \D$ we have now
$$
h_I(x) = \sqrt{\frac{\mu(I_-)\mu(I_+)}{\mu(I)}}\left(\frac{\one_{I_-}(x)}{\mu(I_-)}-\frac{\one_{I_+}}{\mu(I_+)}\right).
$$
Using the same techniques as in the Lebesgue measure case, one can show that $T^{r,s}$ are $L^p$-bounded and of weak type $(1,1)$ if $\mu$ is dyadically doubling, that is, if $\mu(R)\sim \mu(Q)$ for all $Q\in\D$ and all $R\in\D_1(Q)$. But this is not the only situation: in \cite{LSMP}, it is shown that the condition
\begin{equation}\label{eq:balancedDimN}\tag{Bal}
    \sup_{Q\in \D}\sup_{R \in \D_1(Q)} \left[\|h_Q\|_{L^1(\mu)}\|h_R\|_{L^\infty(\mu)}+\|h_Q\|_{L^\infty(\mu)}\|h_R\|_{L^1(\mu)}\right] < \infty
\end{equation}
is the right one to ensure --meaning that it characterizes, up to non-degeneracy-- the $L^p$-boundedness and the weak-type $(1,1)$ bound of $T^{r,s}$. Measures satisfying \eqref{eq:balancedDimN} are called balanced. 

In this note, we address the problem of the boundedness of Haar shift operators defined with respect to general Borel-regular measures $\mu$ at two endpoints that had not been previously considered in the literature: the $L^\infty$ one and its predual. To that end, we consider the natural $\Hone$ and $\BMO$ spaces in the dyadic setting: if we set
$$
\cexp_k(f) = \sum_{Q \in \D_k} \langle f \rangle_Q \one_Q, \quad \diff_k(f)=\cexp_k(f)-\cexp_{k-1}(f),
$$
then we can define the martingale Hardy and $\BMO$ norms via
$$
\|f\|_{\Hone}=\left\|\mathcal{S}f\right\|_1 =\left\|\left(\sum_{k\in\Z} \left|\diff_k (f) \right|^2\right)^{\frac{1}{2}}\right\|_1 \quad \mbox{and} \quad \|f\|_{\BMO}=\sup_{k\in\Z} \left\|\cexp_k\left|f-\cexp_{k-1}(f)\right|\right\|_\infty,
$$
and we denote the corresponding Banach spaces by $\Hone(\mu)$ and $\BMO(\mu)$. Our first result is the following:

\begin{ltheorem}\label{th:theoremA}
Let $\mu$ be a balanced measure. If $T^{r,s}$ is a Haar shift operator of complexity $(r,s)$ which is bounded on $L^2(\mu)$, then:
\begin{itemize}
    \item[(i)] $T$ can be extended to a bounded operator on $\Hone(\mu)$.
    \item[(ii)] $T$ can be extended to a bounded operator on $\BMO(\mu)$.
\end{itemize}
\end{ltheorem}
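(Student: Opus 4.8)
The plan is to prove both endpoints by exploiting the atomic/duality structure of dyadic $\Hone$ and $\BMO$, reducing everything to the $L^2(\mu)$-boundedness hypothesis together with the cancellation built into Haar shifts. I would first record the standard facts about the dyadic martingale spaces adapted to a general $\mu$: $\Hone(\mu)$ admits an atomic decomposition, where an atom $a$ is supported on a cube $Q\in\D$, has $\langle a\rangle_Q=0$, and satisfies $\|a\|_{L^2(\mu)}\le \mu(Q)^{-1/2}$; and $\BMO(\mu)$ is the dual of $\Hone(\mu)$, with the $\BMO$ norm above equivalent to the usual oscillation supremum $\sup_Q \big(\tfrac1{\mu(Q)}\int_Q |f-\langle f\rangle_Q|^2\,d\mu\big)^{1/2}$ (the $L^1$ and $L^2$ oscillation norms being comparable via the standard John--Nirenberg argument, which holds here because the $\cexp_k$ form a martingale). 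Once these are in place, part (i) and part (ii) are dual to each other, so it suffices to prove (i): if $T^{r,s}$ maps $\Hone(\mu)$-atoms to functions with uniformly bounded $\Hone(\mu)$-norm, then by the atomic decomposition and a routine limiting argument $T^{r,s}$ extends boundedly to all of $\Hone(\mu)$, and then (ii) follows by taking adjoints, since the adjoint of a complexity-$(r,s)$ Haar shift is a complexity-$(s,r)$ Haar shift, again bounded on $L^2(\mu)$ and defined with the same balanced $\mu$.

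The heart of the matter is therefore the atomic estimate: given an atom $a$ supported on $Q_0\in\D$, show $\|T^{r,s}a\|_{\Hone(\mu)}\lesssim 1$. I would split $T^{r,s}a$ according to where its output cubes $S$ sit relative to $Q_0$. For the \emph{local part}, consisting of the terms with $S\subseteq Q_0$ (equivalently, because of the complexity constraints, the terms arising from cubes $Q\subseteq Q_0$ together with finitely many generations of ancestors of $Q_0$ contributing output inside $Q_0$), one uses that $\Hone(\mu)$-norm is controlled by $L^2(\mu)$-norm on a fixed cube: $\|g\|_{\Hone(\mu)}\lesssim \mu(Q_0)^{1/2}\|g\|_{L^2(\mu)}$ whenever $g$ is supported on $Q_0$ with mean zero, which is immediate from $\mathcal S g\le$ (square function) and Cauchy--Schwarz, using that the martingale square function is bounded on $L^2(\mu)$. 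Then the $L^2(\mu)\to L^2(\mu)$ hypothesis on $T^{r,s}$ (and on its pieces, which inherit the bound) gives $\|T^{r,s}a\|_{L^2(\mu)}\lesssim \|a\|_{L^2(\mu)}\le \mu(Q_0)^{-1/2}$, so the local part has $\Hone(\mu)$-norm $\lesssim 1$. The one subtlety is that the ``extra'' contributions from the $\le r$ ancestors of $Q_0$ need not be supported in $Q_0$ but are supported in a bounded number of dyadic cubes of size $\sim \ell(Q_0)$; these are handled the same way, distributing the atom's $L^2$ mass over boundedly many pieces. For the \emph{global part}, where the output cube $S$ properly contains $Q_0$ (or is disjoint but linked through a common ancestor within $r$ generations), the key point is that the relevant Haar coefficient $\langle a, h_R^j\rangle$ with $R\supseteq Q_0$ uses only the values of $h_R^j$, which are \emph{constant on $Q_0$} once $R\supsetneq Q_0$; combined with $\langle a\rangle_{Q_0}=0$ this forces all such coefficients to vanish, so the global part is actually empty except for the finitely many boundary generations already absorbed into the local analysis. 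This cancellation is exactly where the mean-zero property of the atom is spent, and it is what makes Haar shifts so much easier than genuine Calderón--Zygmund operators at this endpoint.

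The place where the balanced hypothesis \eqref{eq:balancedDimN} genuinely enters — and what I expect to be the main obstacle — is in controlling the pieces of $T^{r,s}$ that are localized to a cube but whose output lives on a \emph{parent} or near-ancestor: when we peel off the ancestor contributions, we compare $\langle a, h_R^j\rangle$ against $\|h_S^{j'}\|_{L^\infty(\mu)}$ and against the normalization of $a$ on $Q_0$, and the ratios $\|h_R\|_{L^1(\mu)}\|h_{\hat R}\|_{L^\infty(\mu)}$ etc. are precisely the quantities \eqref{eq:balancedDimN} keeps bounded. Without a doubling or balanced assumption, a single Haar function can be enormously large in $L^\infty(\mu)$ relative to its $L^1(\mu)$ size on a child, and the ancestor terms blow up — this is consistent with the sharpness claim in the abstract. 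So concretely I would, after the clean reductions above, isolate a finite list of ``transition'' operators (one for each of the $\lesssim_{r,s} 1$ relative positions of $R$ and $S$ near the boundary of $Q_0$), bound each in the form $\|(\text{piece})a\|_{\Hone(\mu)} \lesssim \mu(Q_0)^{1/2}\sum (\text{piece})a$ restricted to $O(1)$ cubes of comparable size, and check that evaluating this uses only \eqref{eq:balancedDimN} and the coefficient bound $\|\{\alpha_{R,S}^Q\}\|_{\ell^\infty}<\infty$ implied by $L^2(\mu)$-boundedness. Assembling the local estimate, the vanishing of the global part, and the transition estimates gives $\|T^{r,s}a\|_{\Hone(\mu)}\lesssim_{r,s,n} 1$ uniformly over atoms, hence (i); duality then yields (ii).
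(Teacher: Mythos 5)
Your overall architecture (prove the $\Hone$ endpoint by testing on atoms, split each image into a local part handled by $L^2$-boundedness and finitely many ``transition'' terms handled by the balanced condition, then get $\BMO$ by duality since the adjoint is again an $L^2$-bounded Haar shift) is the same skeleton as the paper's, and your identification of where \eqref{eq:balancedDimN} enters --- the $\|h_R\|_{L^1(\mu)}\|h_S\|_{L^\infty(\mu)}$ products for input/output cubes straddling the support --- is exactly the mechanism used in the paper's estimates for the terms $\mathrm{B}$ and $F_2$. The difficulty is your starting point.

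You assume as a ``standard fact'' that $\Hone(\mu)$, defined by $\|\mathcal{S}f\|_1$, admits an atomic decomposition into atoms $a$ supported on a \emph{single} cube $Q$ with $\langle a\rangle_Q=0$ and $\|a\|_{L^2(\mu)}\le\mu(Q)^{-1/2}$. For a general balanced (hence possibly non-doubling) measure this is not a standard fact, and it is precisely the point at which the non-homogeneous theory diverges from the classical one: such single-cube mean-zero atoms generate a space that is in general strictly smaller than the martingale $\Hone(\mu)$. The paper's proof instead rests on the atomic \emph{block} decomposition of \cite{CAP}, $\Hone_{\mathrm{atb}}(\mu)\simeq\Hone(\mu)$, in which a block $b=\sum_j\lambda_j a_j$ satisfies only $\cexp_k b=0$ globally, the individual subatoms $a_j$ (supported on $I_j\in\D_{k_j}$, $k_j\ge k$) carry \emph{no} cancellation, and their normalization includes the logarithmic damping factor $(k_j-k+1)^{-1}$. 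This changes the analysis in a way your argument does not survive: because the subatoms have no mean-zero property, your claim that the ``global part'' vanishes (via $\langle a,h_R\rangle=0$ for $R\supsetneq Q_0$) fails at the subatom level, and one must instead bound up to $k_j-k+s$ nontrivial terms with $J_n^t\supsetneq I_j$ --- which is exactly what the factor $(k_j-k+1)^{-1}$ is there to absorb (see the estimate of $\mathrm{B}$ in Theorem \ref{H1L1}). Without importing the \cite{CAP} decomposition (or proving a substitute), the reduction to your atoms is unjustified and the proof does not close. A secondary, fixable issue: passing from a uniform bound on atoms to boundedness on all of $\Hone(\mu)$ is not automatic in general and needs the $L^1$ (or $L^2$) a priori bound to justify the limiting argument, and the extension of $T$ to all of $\BMO(\mu)$ requires the local kernel-based definition the paper spells out, since $L^\infty_c$ is not norm-dense in $\BMO(\mu)$.
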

Clearly, item (i) above implies (ii) because $[\Hone(\mu)]^*\simeq \BMO(\mu)$, but we have chosen to provide a direct argument for each statement. Moreover, we provide separate --and different-- proofs of the weaker inequalities $\Hone(\mu)\to L^1(\mu)$ and $L^\infty(\mu)\to \BMO(\mu)$. In harmonic analysis terms, the fact that we obtain the stronger endpoint results is natural: a Haar shift operator is a model of a Calder\'on-Zygmund operator $T$ that enjoys an extra cancellation, equivalent to $T1=0$ in $\BMO$. And that kind of Calder\'on-Zygmund operators do map $\BMO$ to $\BMO$. The $\Hone$-type inequality of Theorem \ref{th:theoremA}  uses the atomic block decomposition of $\Hone(\mu)$ from \cite{CAP}, and as far as we know the proof is new even in the Lebesgue measure case. We also remark that Theorem \ref{th:theoremA} provides a new proof of the $L^p$ bounds in \cite{LSMP} via interpolation which does not use the Calder\'{o}n-Zygmund decomposition.

Our second result concerns boundedness of Haar shift operators on martingale Lipschitz spaces. These are defined (see \cite{Weisz}*{page 7}) by the following semi-norm: given $1 \leq q< \infty$ and $\alpha>0$, we set
$$ 
\|f\|_{\Lip}:= \sup_{Q \in \D} \frac{1}{\mu(Q)^{1/q+\alpha}} \left( \int_Q |f- \langle f \rangle_{\widehat{Q}}|^q \, d\mu \right)^{1/q}<\infty.
$$
We define $\Lip$ as the family of functions functions $f \in L^q_{\text{loc}}(\mu)$ with finite $\|\cdot\|_{\Lip} $ norm. Making $\alpha=0$ allows one to recover $\BMO(\mu)$ regardless of the parameter $q$, by the John-Nirenberg inequality. On the other hand, $\Lip$ spaces are dual to $\mathrm{H}^p$ spaces for values $p=p(\alpha)<1$ \cite{Herz1973,Weisz} when $q=2$.

\begin{ltheorem}\label{th:theoremB}
Let $\mu$ be a balanced measure, and suppose $1 < q<\infty$ and $\alpha>0$. If $T^{r,s}$ is a Haar shift operator of complexity $(r,s)$ which is bounded on $L^2(\mu)$, then
 $$
 \|T f\|_{\Lip} \lesssim \|f\|_{\Lip},
 $$ 
 for all $f \in  \Lip$.
\end{ltheorem}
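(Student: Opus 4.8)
The plan is to reduce the Lipschitz estimate to a testing-type condition on individual dyadic cubes, exploiting both the $L^2(\mu)$-boundedness of $T^{r,s}$ and the extra cancellation $T^{r,s}1 = 0$ in the martingale sense. Fix a cube $Q_0 \in \D$; we must control $\mu(Q_0)^{-1/q-\alpha}\|T^{r,s}f - \langle T^{r,s}f\rangle_{\widehat{Q_0}}\|_{L^q(Q_0,\mu)}$. The natural first step is to split $f = (f - \langle f\rangle_{\widehat{Q_0}})\one_{cQ_0} + (f-\langle f\rangle_{\widehat{Q_0}})\one_{(cQ_0)^c} + \langle f\rangle_{\widehat{Q_0}} =: f_1 + f_2 + f_3$, where $cQ_0$ is a bounded dilate adapted to the complexity $(r,s)$ (concretely the union of the $r$-th and $s$-th dyadic ancestors of $Q_0$ and their immediate neighbors in the relevant generations). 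The term $f_3$ is killed by $T^{r,s}1 = 0$. For $f_1$, I would use the $L^2(\mu)\to L^2(\mu)$ bound together with Hölder's inequality and the John--Nirenberg-type self-improvement of the $\Lip$ seminorm (which gives, for the averaged oscillation over $cQ_0$, a bound by $\mu(Q_0)^{\alpha}\|f\|_{\Lip}$ once one uses that $\mu$ is balanced so that $\mu(cQ_0) \sim \mu(Q_0)$ up to complexity-dependent constants along the chain of ancestors); dividing by $\mu(Q_0)^{1/q+\alpha}$ and using $q > 1$ closes this piece. Here the balanced hypothesis enters precisely to guarantee comparability of measures of consecutive dyadic generations, so that passing from $\widehat{Q_0}$ to higher ancestors costs only a constant.

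The heart of the matter is $f_2$, the far-away part. The key step is to observe that, because $T^{r,s}$ has finite complexity, for $x \in Q_0$ the value $T^{r,s}f_2(x)$ only sees the expansions $\langle f_2, h_R^j\rangle$ for cubes $R$ that are ancestors of $Q_0$ up to generation $s$ (or cousins thereof within distance governed by $r$), and the output coefficients $\langle f_2, h_R^j\rangle h_S^{j'}(x)$ with $S$ among the descendants/ancestors dictated by the shift structure. Crucially, since $f_2$ vanishes on $cQ_0$ and each relevant $h_R$ is constant on $Q_0$ (because $Q_0 \subset R' $ for the children $R'$ once $r,s \geq 1$ — more precisely $h_R^j$ is constant on each element of $\D_1(R)$ and $Q_0$ lies inside one such element), the function $T^{r,s}f_2$ is \emph{constant} on $Q_0$, hence $T^{r,s}f_2 - \langle T^{r,s}f_2\rangle_{\widehat{Q_0}}$ restricted to $Q_0$ vanishes — wait, one must be careful: $\langle\cdot\rangle_{\widehat{Q_0}}$ averages over the parent, on which $T^{r,s}f_2$ need not be constant. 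So instead I subtract the correct reference constant: writing $T^{r,s}f_2|_{Q_0} = c_{Q_0}$, we have $\|T^{r,s}f_2 - \langle T^{r,s}f_2\rangle_{\widehat{Q_0}}\|_{L^q(Q_0,\mu)} = \mu(Q_0)^{1/q}|c_{Q_0} - \langle T^{r,s}f_2\rangle_{\widehat{Q_0}}|$, and this difference telescopes along the finitely many ($\lesssim r+s$) generations between $Q_0$ and the ancestors that $T^{r,s}$ can reach, each step estimated by a single Haar coefficient $|\langle f_2, h_R^j\rangle| \cdot \|h_R^j\|_{L^\infty(\mu)}$. Expanding $\langle f_2, h_R^j\rangle$ and using that $h_R^j$ has mean zero, one writes it as $\langle f - \langle f\rangle_{\widehat{Q_0}}, h_R^j\rangle$ over $R$, bounds it by $\|h_R^j\|_{L^1(\mu)}$ times the oscillation of $f$ over $R$, which is $\lesssim \mu(R)^{1/q'}\mu(R)^{1/q+\alpha}\|f\|_{\Lip}$ by Hölder and the definition. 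Multiplying by $\|h_R^j\|_{L^\infty(\mu)}$ and invoking \eqref{eq:balancedDimN} — which bounds $\|h_R\|_{L^1(\mu)}\|h_R\|_{L^\infty(\mu)}$ via the balanced condition after one more application along a single generation — together with $\mu(R)\sim\mu(Q_0)$, yields a total bound $\lesssim \mu(Q_0)^{\alpha}\|f\|_{\Lip}$, which after dividing by $\mu(Q_0)^{1/q+\alpha}$ and multiplying back by the $\mu(Q_0)^{1/q}$ is exactly what we need.

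The main obstacle I anticipate is bookkeeping the geometry of which cubes the complexity-$(r,s)$ shift connects to $Q_0$ and ensuring the ``constant on $Q_0$'' reduction is stated correctly — in particular handling the edge cases $r = 0$ or $s = 0$, where the output Haar function $h_S^{j'}$ may itself be non-constant on $Q_0$ and one cannot simply extract a single constant $c_{Q_0}$; in that degenerate regime one should instead absorb the corresponding term directly into the $L^2$ estimate for $f_1$ by enlarging $cQ_0$, or argue that the complexity-$0$ output still only involves finitely many Haar coefficients indexed near $Q_0$ that are each controlled by the balanced condition. A secondary technical point is the self-improvement / John--Nirenberg step for $\Lip$ with $q > 1$: one needs that the oscillation controlled in $L^q$ also controls, say, the $L^1$ oscillation over the same and slightly larger cubes with the right power of the measure, which follows from Hölder in one direction and a standard good-$\lambda$ or direct dyadic argument (as the passage $\alpha=0 \leadsto \BMO$ via John--Nirenberg mentioned in the excerpt suggests) in the other. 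Once these are in place, summing the three contributions $f_1, f_2, f_3$ and taking the supremum over $Q_0 \in \D$ gives $\|T^{r,s}f\|_{\Lip} \lesssim \|f\|_{\Lip}$.
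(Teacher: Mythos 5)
Your overall architecture (split into a local piece handled by $L^2$/$L^q$ boundedness, a far piece where the finite complexity makes $Tf_2$ constant on $Q_0$ and only finitely many Haar coefficients survive, and a constant piece killed by cancellation) is reasonable, and your ``constant on $Q_0$'' observation for the far part is essentially correct. But there is a genuine gap that breaks both your $f_1$ and $f_2$ estimates: you repeatedly assert that the balanced condition gives $\mu(cQ_0)\sim\mu(Q_0)$ and $\mu(R)\sim\mu(Q_0)$ for the dyadic ancestors $R$ of $Q_0$ up to generation $r+s$. That is the dyadic \emph{doubling} condition, which balanced measures do \emph{not} satisfy in general --- the entire point of the class \eqref{eq:balancedDimN} is that only the quantity $m(I)=\min\{\mu(I_-),\mu(I_+)\}$ is comparable across one generation, while $\mu(\widehat{I})/\mu(I)$ can be unbounded. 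Consequently, bounding $\|f_1\|_{L^q(\mu)}$ by the oscillation over $cQ_0$ produces a factor $\mu(cQ_0)^{1/q+\alpha}$ that cannot be absorbed into $\mu(Q_0)^{1/q+\alpha}$, and the same comparability failure kills the final step of your telescoping estimate for $f_2$. A secondary issue: the ``John--Nirenberg-type self-improvement'' of the $\Lip$ seminorm that you invoke is not available --- the paper lists it as an open question.

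The paper's proof circumvents both problems. It never cuts $f$ off on a dilate; instead it expands $(Tf-\langle Tf\rangle_{\widehat{I}})\one_I$ in the Haar basis and splits according to whether the output cube $K_n^t$ sits inside $I$ or strictly contains it. The inside part equals $T_Ig$ with $g=(f-\langle f\rangle_{\widehat{I}})\one_I$ supported on $I$ itself (no enlargement, so no doubling needed), and the uniform $L^q(\mu)$ bound for localized shifts closes that term. The at most $t+1$ crossing terms are each bounded in $L^\infty$ by $|\langle f\rangle_{(K_m^s)_-}-\langle f\rangle_{(K_m^s)_+}|$, and the key chain is $\mu(I)^{-\alpha}\le m(I)^{-\alpha}\sim m(K_m^s)^{-\alpha}\sim\mu((K_m^s)_-)^{-\alpha}$ --- comparability of the \emph{minima} of children, which is exactly what balanced provides --- followed by Lemma \ref{SibControl}, which converts the difference of the two children's averages into an oscillation over the small child $(K_m^s)_-$, controlled by $\mu((K_m^s)_-)^{\alpha}\|f\|_{\Lip}$. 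To repair your argument you would need to replace every ancestor-measure comparison by a comparison of $m(\cdot)$ along the chain and route the far-field Haar coefficients through an oscillation over the minimizing child, which is in substance the paper's argument.
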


One can see Theorem \ref{th:theoremB} as a natural counterpart of the results by \cite{GG} in the continuous setting, where no conditions on the underlying measure are assumed. We shall show in contrast that our assumption is necessary in the dyadic setting. The rest of this note is organized as follows: Theorem \ref{th:theoremA} is proved in Section \ref{sec1}, while Section \ref{sec2} is devoted to Theorem \ref{th:theoremB}, and toward the end we collect several concluding remarks and open questions. 

\section{Endpoint results}\label{sec1}
To ease the exposition, we prove all results in dimension $n=1$. Also, for definiteness, we work with the standard dyadic lattice on $\R$
$$
\D:=\{2^{-k}[\ell,\ell+1): k, \ell \in \Z\}
$$ 
We always assume that $\mu$ is a balanced measure on $\R$ which satisfies $0<\mu(I)<\infty$ for all $I \in \D$, and also $\mu([0, \infty))= \mu(-\infty,0]=\infty$ as in \cite{LSMP}. This is not essential but it simplifies a bit our computations. The only adjustments needed in case one has quadrants of finite measure is to consider the corresponding indicator functions as elements of $\Hone(\mu)$ and as non-zero norm elements of $\BMO(\mu)$. For a detailed discussion of martingale norms over two-sided, non-homogeneous filtrations, one can see \cite{Treil13} (even though our situation is much less complicated than the general one studied there, and that is why we omit the details).

$\mathcal{D}_k$ denotes the collection of all dyadic intervals of length $2^{-k}$. If $I$ is a dyadic interval, we denote by $\widehat{I}$ its dyadic parent. In the one-dimensional situation, \eqref{eq:balancedDimN} reduces to 
\begin{equation}\label{eq:balancedDim1}
    m(I) := \min\left\{\mu(I_-),\mu(I_+)\right\} \sim \min\left\{\mu(\widehat{I}_-),\mu(\widehat{I}_+)\right\}.
\end{equation}
for all $I\in\D$. Given one such $I$, we enumerate the $2^m$ elements in $\D_m(I)$ by $I_m^s,  0 \leq s < 2^m.$ Without loss of generality (see \cite{CAPW}), we may suppose that a generic Haar shift $T$ takes the form
$$
Tf(x)= \sum_{I \in \D} \alpha_I \langle f, h_{I_{m}^s} \rangle h_{I_n^t}(x)  , \quad \sup_{I \in \D}|\alpha_I|\leq 1, 
$$
and below we always assume that our Haar shift operator is of that form.

\subsection{$\BMO$ estimates} We start proving estimates that hold for $f$ in a restricted class, and not the whole $\BMO(\mu)$. Below, we use the characterization of the norm given by
$$ 
\|f\|_{\BMO} \sim \sup_{I \in \D} \langle |f- \langle f \rangle_{I}| \rangle+ \sup_{I \in \D} |\langle f \rangle_{\widehat{I}}- \langle f \rangle_{I}|,
$$
which is straightforward to check.

\begin{thm}\label{LInfBMO}
If $f\in L^\infty(\mu)$ has compact support, then

\begin{equation*}
\|T f\|_{\BMO} \lesssim \|f\|_{L^\infty(\mathbb{R})}.  
\end{equation*}

\begin{proof} 
We have
$$
\|Tf\|_{\BMO} \sim \sup_{I \in \D} \langle |Tf- \langle Tf \rangle_{I}| \rangle_{I}+ \sup_{I \in \D} |\langle Tf \rangle_{\widehat{I}}- \langle Tf \rangle_{I}| =: \mathrm{A}+\mathrm{B},
$$
and we need to bound both of these quantities above by a constant multiple of $\|f\|_{L^\infty}.$ We will first bound $\mathrm{A}$. Fix $I \in \D$, and write $Tf= T(f \one_{I})+ T(f \one_{I^c}):= T(f_1)+ T(f_2)$. We have, by Jensen's inequality and the $L^2$ boundedness on $T$,

\begin{align*}
 \langle |Tf_1- \langle Tf_1 \rangle_{I}| \rangle_{I} & \leq  \frac{2}{\mu(I)} \int_{I} |T(f_1)| \, d\mu \\
 & \leq 2 \left(\frac{1}{\mu(I)} \int_{I} |T(f_1)|^2 \, d\mu \right)^{1/2} \\
 & \lesssim \left( \frac{1}{\mu(I)} \int_{I} |f|^2 \, d \mu\right)^{1/2} \leq\|f\|_{L^\infty},
\end{align*}
as desired. Next, we control the same piece with $f_1$ replaced by $f_2$. The integrand is
\begin{equation}
\left[ \left(T(f_2)- \langle T(f_2) \rangle_{I}\right) \one_{I}   \right] = \sum_{J \in \D}
\alpha_J \langle f \one_{I^c}, h_{J_m^s} \rangle h_{J_n^t} \one_I - \left( \frac{1}{\mu(I)} \int_I  \left(\sum_{J \in \D}
\alpha_J\langle f \one_{I^c}, h_{J_m^s} \rangle h_{J_n^t}  \right) \, d\mu \right) \one_I \label{IntegrandDisplay}
\end{equation}

In the first sum, by support conditions all the terms vanish except those corresponding to $J$ satisfying $J_n^t \cap I \neq \emptyset$ and $J_s^m \cap I^c \neq \emptyset $. In the second sum, all surviving $J$ satisfy $J_n^t \supsetneq I$ and $J_s^m \cap I^c \neq \emptyset$. Therefore, if we let $x_I$ denote any point belonging to the interval $I$, \eqref{IntegrandDisplay} reduces to

\begin{align*}
& \sum_{\substack{J \in \D: \\ J_n^t \cap I \neq \emptyset,J_s^m \cap I^c \neq \emptyset } }
\alpha_J \langle f \one_{I^c}, h_{J_m^s} \rangle h_{J_n^t} \one_I - \left( \frac{1}{\mu(I)} \int_I  \left(\sum_{\substack{J \in \D: \\ J_t^n \supsetneq I,J_s^m \cap I^c \neq \emptyset } }
\alpha_J  \langle f \one_{I^c}, h_{J_m^s} \rangle h_{J_n^t}  \right) \, d\mu \right) \one_I \\
& =\sum_{\substack{J \in \D: \\ J_t^n \cap I \neq \emptyset,J_s^m \cap I^c \neq \emptyset } } \alpha_J
\langle f \one_{I^c}, h_{J_m^s} \rangle h_{J_n^t} \one_I- \sum_{\substack{J \in \D: \\ J_t^n \supsetneq I,J_s^m \cap I^c \neq \emptyset } } \alpha_J
\langle f \one_{I^c}, h_{J_m^s} \rangle h_{J_n^t}(x_I) \chi(I) \\
& = \sum_{\substack{J \in \D: \\ J_t^n \subseteq I,J_s^m \cap I^c \neq \emptyset } }
\alpha_J \langle f \one_{I^c}, h_{J_m^s} \rangle h_{J_n^t}.
\end{align*}    
If $J$ satisfies both conditions in the final sum, then $J \subseteq I^{(t)}$ (the $t$-th ancestor of $I$), but also $J \cap I \neq \emptyset$ and $J_s^m \not\subseteq I.$ These three conditions force

$$ I \subseteq J \subseteq I^{(t)},$$
and there are at most $t$ intervals intervals $J$ satisfying them. Therefore, 
\begin{align*}
\left \| \sum_{\substack{J \in \D: \\ J_t^n \subseteq I,J_s^m \cap I^c \neq \emptyset } }
\alpha_J \langle f \one_{I^c}, h_{J_m^s} \rangle h_{J_n^t} \right \|_{L^\infty} & \leq t \sup_{J, \, I \subseteq J \subseteq I^{(t)}} |\langle f \one_{I^c}, h_{J_m^s} \rangle| \|h_{J_n^t}\|_{L^\infty}\\
& \lesssim \|f\|_{L^\infty } \sup_{J, \, I \subseteq J \subseteq I^{(t)}} \|h_{J_m^s}\|_{L^1(\mu)} \|h_{J_n^t}\|_{L^\infty}  \lesssim \|f\|_{L^\infty },
\end{align*}
where we used \eqref{eq:balancedDim1} in the final step. This completes the estimate for $\mathrm{A}.$ We now turn to estimating $\mathrm{B}$. By the mean-zero property of the Haar functions, we have 
$$ \langle Tf \rangle_I= \sum_{J: \, J_n^t \supsetneq I} \alpha_J \langle f, h_{J_m^s} \rangle h_{J_n^t}(x_I),$$

$$ \langle Tf \rangle_{\widehat{I}}= \sum_{J: \, J_n^t \supsetneq \widehat{I}} \alpha_J\langle f, h_{J_m^s} \rangle h_{J_n^t}(x_I), $$
for any  $x_I\in I$ (which also belongs to $\widehat{I}$). Thus, the only term that survives is $J$ with $J_n^t=\widehat{I}$ (if there even is such a $J$), and in this case $J_m^s=K$ where $K \in \D$ satisfies $\operatorname{dist}_{\D}(K,\widehat{I}) \leq s+t.$ Therefore, in this case,
$$
|\langle Tf \rangle_I-\langle Tf \rangle_{\widehat{I}}|= |\alpha_J| |\langle f, h_K \rangle h_{\widehat{I}}(x_I)| \leq \|f\|_{L^\infty} \|h_K\|_{L^1(\mu)} \|h_{\widehat{I}}\|_{L^\infty(\mu)} \lesssim \|f\|_{L^\infty},
$$
using again \eqref{eq:balancedDim1} in the last step. 
\end{proof}
\end{thm}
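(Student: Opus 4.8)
The plan is to control $\|Tf\|_{\BMO}$ through the equivalent seminorm
\[
\|Tf\|_{\BMO}\sim \sup_{I\in\D}\Big\langle\big|Tf-\langle Tf\rangle_I\big|\Big\rangle_I+\sup_{I\in\D}\big|\langle Tf\rangle_{\widehat I}-\langle Tf\rangle_I\big|=:\mathrm{A}+\mathrm{B},
\]
bounding $\mathrm{A}$ and $\mathrm{B}$ separately by $\|f\|_{L^\infty}$. Compact support of $f$ makes all pairings $\langle f,h_J\rangle$ and the series defining $Tf$ well defined. I will also use that for a balanced measure $\|h_K\|_{L^1(\mu)}\sim m(K)^{1/2}$ and $\|h_K\|_{L^\infty(\mu)}\sim m(K)^{-1/2}$, so that \eqref{eq:balancedDim1}, iterated along a chain of $k$ consecutive dyadic parents, gives $m(K)\sim m(L)$ whenever $K\subseteq L$ and $\ell(L)=2^k\ell(K)$, with constant depending only on $k$.

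For $\mathrm{A}$, fix $I$ and split $f=f\one_I+f\one_{I^c}=:f_1+f_2$. The local piece follows directly from the hypothesis: by Jensen's inequality and $L^2(\mu)$-boundedness,
\[
\Big\langle\big|Tf_1-\langle Tf_1\rangle_I\big|\Big\rangle_I\le 2\langle|Tf_1|\rangle_I\le 2\Big(\tfrac1{\mu(I)}\!\int_I|Tf_1|^2\,d\mu\Big)^{1/2}\lesssim\Big(\tfrac1{\mu(I)}\!\int_I|f|^2\,d\mu\Big)^{1/2}\le\|f\|_{L^\infty}.
\]
For the far piece I would expand $\big(Tf_2-\langle Tf_2\rangle_I\big)\one_I$ in the Haar basis. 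By support, a summand indexed by $J$ survives only if $J_m^s\cap I^c\neq\emptyset$ (so that $\langle f\one_{I^c},h_{J_m^s}\rangle\neq0$) and $h_{J_n^t}$ is not identically zero on $I$, i.e.\ $J_n^t$ is nested with $I$. When $J_n^t\supsetneq I$ the function $h_{J_n^t}$ is constant on $I$, so the corresponding Haar term and the matching term of its average over $I$ agree on $I$ and cancel; what is left is $\sum_J\alpha_J\langle f\one_{I^c},h_{J_m^s}\rangle h_{J_n^t}$ over $J$ with $J_n^t\subseteq I$ and $J_m^s\cap I^c\neq\emptyset$. These conditions force $I$ and $J$ to be nested, rule out $J\subseteq I$ (which would give $J_m^s\subseteq I$), and bound $\ell(J)\le 2^n\ell(I)$, hence $I\subseteq J\subseteq I^{(n)}$, so there are only boundedly many such $J$, depending on the complexity. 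Bounding each surviving term by $|\alpha_J|\,\|f\|_{L^\infty}\|h_{J_m^s}\|_{L^1(\mu)}\|h_{J_n^t}\|_{L^\infty(\mu)}$ and using $m(J_m^s)\sim m(J)\sim m(J_n^t)$ from the iterated \eqref{eq:balancedDim1} finishes $\mathrm{A}$.

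For $\mathrm{B}$, the mean-zero property of the Haar functions shows that only $J$ with $J_n^t\supsetneq I$ (resp.\ $J_n^t\supsetneq\widehat I$) contribute to $\langle Tf\rangle_I$ (resp.\ to $\langle Tf\rangle_{\widehat I}$), and on each such $J_n^t$ the function $h_{J_n^t}$ is constant over $\widehat I$. Since the only dyadic interval containing $I$ but not $\widehat I$ is $\widehat I$ itself, the difference $\langle Tf\rangle_{\widehat I}-\langle Tf\rangle_I$ collapses to the single term coming from $J$ with $J_n^t=\widehat I$; for this $J$ (the $n$-th dyadic ancestor of $\widehat I$) we have $J_m^s=:K$ a fixed dyadic interval at bounded dyadic distance from $\widehat I$, and $|\alpha_J\langle f,h_K\rangle h_{\widehat I}(x_I)|\le\|f\|_{L^\infty}\|h_K\|_{L^1(\mu)}\|h_{\widehat I}\|_{L^\infty(\mu)}\lesssim\|f\|_{L^\infty}$ by \eqref{eq:balancedDim1}.

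The main obstacle is the far piece of $\mathrm{A}$: one must run the Haar bookkeeping carefully to verify both that subtracting $\langle Tf_2\rangle_I$ exactly cancels the terms with $J_n^t\supsetneq I$ (leaving finitely many $J$, all sandwiching $I$) and that the balanced hypothesis is precisely what makes those few terms uniformly bounded. In the doubling case the latter is automatic, but in general \eqref{eq:balancedDim1} is doing genuine work here, which is also the mechanism behind the sharpness of the balanced class. The local $L^2$ estimate and the computation for $\mathrm{B}$ are routine by comparison.
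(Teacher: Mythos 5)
Your proof is correct and follows essentially the same route as the paper's: the same equivalent seminorm, the same local/far splitting for $\mathrm{A}$ with the $L^2$ bound on the local piece and the cancellation of the $J_n^t\supsetneq I$ terms against the average on the far piece, and the same single-term collapse for $\mathrm{B}$. The only differences are cosmetic: you spell out the mechanism $\|h_K\|_{L^1(\mu)}\sim m(K)^{1/2}$, $\|h_{K}\|_{L^\infty(\mu)}\sim m(K)^{-1/2}$ behind the use of the balanced condition, and you correctly locate the surviving intervals in $I\subseteq J\subseteq I^{(n)}$, where the paper writes $I^{(t)}$ (an apparent slip conflating the depth parameter $n$ with the enumeration index $t$).
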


We next obtain the upgraded estimate that will imply that $T$ maps $\BMO$ to $\BMO$. Below, we use the John-Nirenberg inequality (see, for example, \cite{HMW}) to simplify the proof. 

\begin{thm}\label{BMOtoBMO}
If $f\in L^\infty(\mu)$ has compact support, then
\begin{equation*}
\|T f\|_{\BMO} \lesssim \|f\|_{\BMO}.
\end{equation*}
\begin{proof}
By the John-Nirenberg inequality, it is enough to prove
\begin{equation*}
\sup_{I \in \D} \fint_{I} |Tf- \langle Tf \rangle_{\widehat{I}}|^2\, d\mu \leq C  \sup_{I \in \D} \fint_{I} |f- \langle f \rangle_{\widehat{I}}|^2\, d\mu. 
\end{equation*}
To this end, fix $I \in D.$ We expand the integrand as a sum of Haar functions:
\begin{align*}
(Tf- \langle Tf \rangle_{\widehat{I}}) \one_I & = \left(\sum_{J \subseteq \widehat{I}} \langle Tf, h_J \rangle h_J \right) \one_I \\
& =\left(\sum_{J \subseteq \widehat{I}} \sum_{K: K_n^t=J} \alpha_K \langle f, h_{K_m^s} \rangle h_{K_n^t} \right) \one_I\\
& = \sum_{K \subseteq I} \alpha_K \langle f, h_{K_m^s} \rangle h_{K_n^t}+ \sum_{\substack{K \supsetneq I:\\ K_n^t \subseteq \widehat{I}}} \alpha_K \langle f, h_{K_m^s} \rangle h_{K_n^t} \one_I \\
& = \sum_{K \subseteq I} \alpha_K \langle g, h_{K_m^s} \rangle h_{K_n^t}+ \sum_{\substack{K \supsetneq I:\\ K_n^t \subseteq \widehat{I}}} \alpha_K \langle f, h_{K_m^s} \rangle h_{K_n^t} \one_I\\
& := F_1 + F_2,
\end{align*}
where $g= (f- \langle f \rangle_{\widehat{I}}) \one_I.$ By orthogonality of the Haar functions, we have
\begin{align*} 
\fint_{I} |F_1|^2 \, d \mu & \leq  \fint_{I} |g|^2\\
& =\fint_{I} |f- \langle f \rangle_{\widehat{I}}|^2\, d\mu.
\end{align*}
$F_2$ is a summation with at most $t+1$ terms, so it suffices to control the $L^\infty$ norm of a single term by $\|f\|_{\BMO}$. But this can easily be done using \eqref{eq:balancedDim1} as in the proof of Theorem \ref{LInfBMO}.
\end{proof}
\end{thm}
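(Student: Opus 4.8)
The plan is to carry out exactly the reduction indicated in the statement. By the John--Nirenberg inequality it suffices to prove the oscillation bound
$$
\sup_{I\in\D}\,\fint_I\big|Tf-\langle Tf\rangle_{\widehat{I}}\big|^2\,d\mu\;\lesssim\;\sup_{I\in\D}\,\fint_I\big|f-\langle f\rangle_{\widehat{I}}\big|^2\,d\mu\;\sim\;\|f\|_{\BMO}^2 ,
$$
so fix $I\in\D$. Decomposing $Tf-\langle Tf\rangle_{\widehat{I}}$ into its martingale differences below the scale of $\widehat{I}$ and inserting the Haar shift structure $\langle Tf,h_J\rangle=\alpha_K\langle f,h_{K_m^s}\rangle$ for the unique $K$ with $K_n^t=J$ (when such a $K$ exists), one rewrites
$$
\big(Tf-\langle Tf\rangle_{\widehat{I}}\big)\one_I\;=\;F_1+F_2,\qquad F_1=\sum_{K\subseteq I}\alpha_K\langle f,h_{K_m^s}\rangle\,h_{K_n^t},\quad F_2=\sum_{\substack{K\supsetneq I\\ K_n^t\subseteq\widehat{I}}}\alpha_K\langle f,h_{K_m^s}\rangle\,h_{K_n^t}\one_I ,
$$
where $F_1$ needs no truncation since $K_n^t\subseteq K\subseteq I$, and $F_2$ has at most a bounded number of terms --- with bound depending only on the complexity $(m,n)$ --- by the same containment-and-length counting used for the term $\mathrm{A}$ in the proof of Theorem~\ref{LInfBMO}.

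The local term $F_1$ is handled by orthogonality. Since $h_{K_m^s}$ is supported in $K\subseteq I$ and has $\mu$-mean zero, one may replace $f$ there by $g:=(f-\langle f\rangle_{\widehat{I}})\one_I$. The cubes $\{K_n^t:K\subseteq I\}$ are pairwise distinct (distinct sizes for cubes of different generations, disjoint for cubes of the same generation), so $\{h_{K_n^t}\}_{K\subseteq I}$ is an orthonormal subfamily; combined with Bessel's inequality for $\{h_{K_m^s}\}_{K\subseteq I}$ and $|\alpha_K|\le1$ this yields $\|F_1\|_{L^2(\mu)}^2\le\|g\|_{L^2(\mu)}^2$, hence $\fint_I|F_1|^2\,d\mu\le\fint_I|f-\langle f\rangle_{\widehat{I}}|^2\,d\mu$.

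The tail term $F_2$ is the one genuinely new point, and I expect it to be the main obstacle: the crude estimate $|\langle f,h_{K_m^s}\rangle|\le\|f\|_{L^\infty}\|h_{K_m^s}\|_{L^1(\mu)}$ available in Theorem~\ref{LInfBMO} is no longer at our disposal and must be replaced by a $\BMO$ substitute. Writing $\lambda:=(\mu((K_m^s)_-)\mu((K_m^s)_+)/\mu(K_m^s))^{1/2}$, one has the identity $\langle f,h_{K_m^s}\rangle=\lambda\big(\langle f\rangle_{(K_m^s)_-}-\langle f\rangle_{(K_m^s)_+}\big)$, and inserting $\langle f\rangle_{K_m^s}$ and using the triangle inequality bounds the right-hand side by $\lambda\,\|f\|_{\BMO}\sim\|h_{K_m^s}\|_{L^1(\mu)}\|f\|_{\BMO}$, because differences of a dyadic average from the average on the parent are exactly controlled by the $\BMO$ seminorm. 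Then $\|h_{K_m^s}\|_{L^1(\mu)}\|h_{K_n^t}\|_{L^\infty(\mu)}\lesssim1$ follows by iterating \eqref{eq:balancedDim1} along the dyadic chain of bounded length joining $K_m^s$, $K$ and $K_n^t$, exactly as in Theorem~\ref{LInfBMO}. Hence $\|F_2\|_{L^\infty(\mu)}\lesssim\|f\|_{\BMO}$, so $\fint_I|F_2|^2\,d\mu\lesssim\|f\|_{\BMO}^2$, and combining with the bound for $F_1$ by the triangle inequality in $L^2(\mu)$ over $I$ completes the proof. The conceptual mechanism is that the cancellation of $h_{K_m^s}$ converts the pairing against the far-away part of $f$ into an oscillation that $\BMO$ detects, after which the balanced hypothesis absorbs the remaining $L^1$--$L^\infty$ Haar pairing.
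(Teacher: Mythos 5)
Your proof is correct and follows essentially the same route as the paper: the identical $F_1+F_2$ decomposition, orthogonality for the local part, and a term-by-term $L^\infty$ bound for the finitely many tail terms. The one place you go beyond the paper is in spelling out the $\BMO$ substitute for the $L^\infty$ pairing bound on $F_2$ --- namely $\langle f,h_{K_m^s}\rangle=\lambda\bigl(\langle f\rangle_{(K_m^s)_-}-\langle f\rangle_{(K_m^s)_+}\bigr)$ with $\lambda\sim\|h_{K_m^s}\|_{L^1(\mu)}$ --- which is exactly the detail the paper leaves to the reader with its reference back to Theorem \ref{LInfBMO}.
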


The above results do not imply item (ii) of Theorem \ref{th:theoremA}, because $L^2(\mu)$ is not norm-dense in $\BMO(\mu)$. To extend the definition of $T$ to the whole space, we proceed in the standard way. A Haar shift operator is a kernel operator, meaning that the representation
$$
Tf(x) =\int K(x,y) f(y) d\mu(y)
$$
holds for some kernel $K$ which is locally integrable and $f\in L^2(\mu)$.
In addition, we have $K(x,y)=K(x,z)$ and $K(y,x)=K(z,x)$ whenever $y,z\in I$ and $x \in [I^{(r+s)}]^c$ (say). This means that, for any $g$ supported away from $I^{(r+s)}$, we have
$$
\int_{[I^{(r+s)}]^c} [|K(x,y)-K(x,z)|+|K(y,x)-K(z,x)|] |g(x)|d\mu(x)=0.
$$
%In addition, $K$ satisfies the H\"ormander condition
%$$
%\sup_{I\in\D} \sup_{y,z\in I} \int_{\widehat{I}^c} [|K(x,y)-K(x,z)|+|K(y,x)-K(z,x)|] d\mu(x) < \infty,
%$$
Therefore, one can safely define $T$ as follows for any given $f\in \BMO(\mu)$: fix a cube $I_0 \in \D$ that contains the origin and define
$$
Tf(x) = T\left(f \one_{I_0^{(r+s)}}\right)(x),
$$
for $x\in I_0$. One can check that this definition depends only on the choice of $I_0$ (but not on the point $x$), and so it defines $Tf$ as a $\BMO$ function. This definition allows one to upgrade Theorem \ref{BMOtoBMO} to part (ii) of Theorem \ref{th:theoremA}. The standard details are left to the reader.

\subsection{$\Hone$ estimates} To prove our $\Hone$ results, we need to use the atomic block decomposition from \cite{CAP}. Let $1< p< \infty.$ A $\mu$-measurable function $b: \mathbb{R} \rightarrow \R$ is a martingale $p$-atomic block when there exists $k \in \Z$ such that
\begin{itemize}
    \item $\cexp_k b=0$;
    \item $b= \sum_j \lambda_j a_j$, where $\text{supp}(a_j) \subset I_j$, $I_j \in \D_{k_j}$ for $k_j \geq k$ and
    $$ \|a_j\|_{L^p(\mu)} \leq \mu(I_j)^{-1/p'} \left( \frac{1}{k_j-k+1} \right).$$
\end{itemize}
We refer to each $a_j$ as a $p$-subatom. Given a martingale $p$-atomic block, set 
$$|b|^1_{\text{atb},p}= \inf_{\substack{b= \sum_j \lambda_j a_j; \\ a_j \text{ $p$-subatoms}}} \sum_{j \geq 1} |\lambda_j|.  
$$
We define the atomic block Hardy space
$$ \Hone_{\text{atb}}(\mu)= \left\{f \in L^1(\mu): f= \sum_{i} b_i, b_i \text{ martingale 2-atomic block} \right\},$$
equipped with the norm 
$$ \|f\|_{\Hone_{\text{atb}}(\mu)}= \inf_{\substack{f= \sum_i b_i; \\ b_i\text{ $2$-atomic blocks}}} \sum_{i \geq 1} |b_i|^{1}_{\text{atb},2} = \inf_{\substack{f= \sum_i b_i; \\ b_i= \sum_{j} \lambda_{i,j} a_{ij}}} \sum_{i,j \geq 1} |\lambda_{i,j}|. $$
Then, the main result of \cite{CAP} states that 
$$ 
\Hone_{\text{atb}}(\mu) \simeq \Hone(\mu) 
$$
with equivalent Banach space norms. 

\begin{thm}\label{H1L1}
For all $f \in \Hone(\mu)$, there holds 
\begin{equation}
\|T f\|_{L^1(\mu)} \lesssim \|f\|_{\Hone_{\mathrm{atb}}(\mu)}.
  \label{H1L1Eq}  
\end{equation}
\begin{proof}
It is easy to verify that $\|T\|_{\Hone_{\text{atb}}(\mu) \rightarrow L^1(\mu)}< \infty$ is equivalent to proving
\begin{equation} \|T(b)\|_{L^1(\mu)} \leq C |b|^{1}_{\text{atb},2} \label{reduction} \end{equation}
for all $2$-atomic blocks $b$ and $C$ an independent constant. Fix one such $b$. By density, we may assume that it has a finite atomic expansion and thus has a Haar expansion which converges unconditionally in $L^2(\mu).$ Using the fact that there exists $k \in \Z$ with $\cexp_kb=0$, we have, using the definition of expectation and Haar functions,
\begin{align*}
T(b) & = \sum_{\ell} \sum_{J \in \D_{\ell}} \alpha_J \langle b, h_{J_m^s} \rangle h_{J_n^t}\\
& = \sum_{\ell}\sum_{J \in \D_{\ell}} \alpha_J \langle b- \cexp_k b, h_{J_m^s} \rangle h_{J_n^t}\\
& = \sum_{\ell \geq k-s}\sum_{J \in \D_{\ell}} \alpha_J \langle b- \cexp_k b, h_{J_m^s} \rangle h_{J_n^t}\\
& = \sum_{\ell \geq k-s}\sum_{J \in \D_{\ell}} \alpha_J \langle b, h_{J_m^s} \rangle h_{J_n^t}\\ 
& = \sum_{K \in \D_{k-s}} \sum_{J \subseteq K} \alpha_J \langle b, h_{J_m^s} \rangle h_{J_n^t}.
\end{align*}
Since $K \in \D_{k-s}$ are pairwise disjoint, we may estimate the $L^1$-norm of $T(b)$ as follows:
\begin{align*}
\|T(b)\|_{L^1(\mu)} & = \sum_{K \in \D_{k-s}}\left \|  \sum_{J \subseteq K} \alpha_J \langle b, h_{J_m^s} \rangle h_{J_n^t}\right \|_{L^1(\mu)}\\
& = \sum_{K \in \D_{k-s}}\left \| \sum_{j: I_j \subseteq K} \sum_{J \subseteq K} \lambda_j \alpha_J \langle a_j, h_{J_m^s} \rangle h_{J_n^t}\right \|_{L^1(\mu)} \\
& \leq \sum_{K \in \D_{k-s}} \sum_{j: I_j \subseteq K} |\lambda_j| \left \| \sum_{J \subseteq K} \alpha_J  \langle a_j, h_{J_m^s} \rangle h_{J_n^t}\right \|_{L^1(\mu)} 
\end{align*}
If we show 
\begin{equation}
\left \| \sum_{J \subseteq K} \alpha_J  \langle a_j, h_{J_m^s} \rangle h_{J_n^t}\right \|_{L^1(\mu)}  \lesssim 1, \label{Atom Reduction}
\end{equation} with implicit constant independent of $j$, then summing in $j$ and then in $K$ we obtain (using disjointness of $K \in \D_{s-k}$) $$ \|T(b)\|_{L^1(\mu)} \lesssim \sum_{j} |\lambda_j|,$$
after which \eqref{reduction} follows upon taking an infimum over all possible representations $b= \sum_{j} \lambda_j a_j.$ To establish \eqref{Atom Reduction}, we break the sum into two pieces:
\begin{align*}
\left \| \sum_{J \subseteq K} \alpha_J  \langle a_j, h_{J_m^s} \rangle h_{J_n^t}\right \|_{L^1(\mu)}  & \leq \left \| \sum_{\substack{J \subseteq K:\\ J_n^t \subseteq I_j}} \alpha_J \langle a_j, h_{J_m^s} \rangle h_{J_n^t}\right \|_{L^1(\mu)} + \left \| \sum_{\substack{J \subseteq K:\\ J_n^t \supsetneq I_j}} \alpha_J  \langle a_j, h_{J_m^s} \rangle h_{J_n^t}\right \|_{L^1(\mu)} \\
& := \mathrm{A}+ \mathrm{B}.
\end{align*}
For $\mathrm{A}$, note that since all $h_{J_n^t}$ are supported on $I_j$, we can apply H\"{o}lder's inequality to obtain 

\begin{align*}
\mathrm{A} & \lesssim \mu(I_j)^{1/2}\left \| \sum_{\substack{J \subseteq K:\\ J_n^t \subseteq I_j}} \alpha_J  \langle a_j, h_{J_m^s} \rangle h_{J_n^t}\right \|_{L^2(\mu)} \\
& \lesssim \mu(I_j)^{1/2} \|a_j\|_{L^2(\mu)}\\
& \lesssim \mu(I_j)^{1/2} \mu(I_j)^{-1/2} \left(\frac{1}{k_j-k+1} \right)\\
& \leq 1.
\end{align*}
On the other hand for $\mathrm{B}$, for fixed $K \in \D_{k-s}$ and $j$ satisfying $I_j \subseteq K$, if we have $J \subseteq K$ and $J_n^t \supsetneq I_j$, then

$$ I_j^{(t)} \subsetneq J \subseteq K,$$
and a moment's thought shows that there can be at most $k_j-k+s$ intervals $J$ satisfying these constraints. In fact, the number of intervals is less than or equal to $\max\{k_j-k+s-t,0\}$, but we will not need such a precise estimate.  Therefore, we control $\mathrm{B}$ as follows, using the balanced condition on $\mu$:

\begin{align*}
\mathrm{B} & \leq (k_j-k+s) \max_{J:I_j^{(t)} \subsetneq J \subseteq K} \| \alpha_J \langle a_j, h_{J_m^s} \rangle h_{J_n^t}\|_{L^1(\mu)}\\
& \leq (k_j-k+s) \|a_j\|_{L^1(\mu)} \max_{J:I_j^{(t)} \subsetneq J \subseteq K} \|h_{J_m^s}\|_{L^\infty} \|h_{J_n^t}\|_{L^1(\mu)}\\
& \lesssim (k_j-k+s) \mu(I_j)^{1/2} \|a_j\|_{L^2(\mu)}\\
& \lesssim \left( \frac{k_j-k+s}{k_j-k+1} \right) \mu(I_j)^{1/2} \mu(I_j)^{-1/2}\lesssim 1.
\end{align*}
With this estimate the proof is complete. 
\end{proof}
\end{thm}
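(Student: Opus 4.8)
The plan is to deduce the estimate from the atomic block decomposition $\Hone_{\text{atb}}(\mu)\simeq\Hone(\mu)$ of \cite{CAP}. Since $\|Tb\|_{L^1(\mu)}$ is subadditive in $b$ and a routine density argument reduces matters to $2$-atomic blocks with a finite subatom expansion (so that $Tb$ is unambiguously defined), it suffices to prove $\|Tb\|_{L^1(\mu)}\lesssim|b|^{1}_{\text{atb},2}$ for one such block $b$; summing over the blocks of a decomposition $f=\sum_i b_i$ and taking the infimum then yields $\|Tf\|_{L^1(\mu)}\lesssim\|f\|_{\Hone_{\text{atb}}(\mu)}$ for all $f\in\Hone(\mu)$. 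Fix $b$ with $\cexp_k b=0$ and $b=\sum_j\lambda_j a_j$. The first step is to cash in the vanishing of $\cexp_k b$: by the tower property $\cexp_j b=0$ for every $j\le k$, whence $\langle b,h_P\rangle=0$ for all dyadic $P$ with $\ell(P)>2^{-k}$. Consequently the Haar expansion of $Tb$ only involves cubes $J$ lying inside cubes $K$ of a fixed generation $k'\le k$ with $k-k'$ controlled by the complexity of $T$, these $K$ are pairwise disjoint, and each piece $\sum_{J\subseteq K}\alpha_J\langle b,h_{J_m^s}\rangle h_{J_n^t}$ is supported in $K$; hence $\|Tb\|_{L^1(\mu)}=\sum_K\bigl\|\sum_{J\subseteq K}\alpha_J\langle b,h_{J_m^s}\rangle h_{J_n^t}\bigr\|_{L^1(\mu)}$. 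A comparison of side lengths shows every subatom support $I_j$ sits inside exactly one such $K$, so expanding $b$ inside each $K$ and using the triangle inequality reduces everything to the single-subatom estimate
\[
\Bigl\|\sum_{J\subseteq K}\alpha_J\langle a_j,h_{J_m^s}\rangle\,h_{J_n^t}\Bigr\|_{L^1(\mu)}\lesssim 1,
\]
uniformly over $j$ with $I_j\subseteq K$ (with constant also independent of $K$); summing $\sum_j|\lambda_j|$, then over $K$, and passing to the infimum finishes the proof.

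The heart of the matter is this single-subatom bound, which I would establish by splitting the sum over $J\subseteq K$ according to whether $J_n^t\subseteq I_j$ (the interior part) or $J_n^t\supsetneq I_j$ (the overlap part). In the interior part every $h_{J_n^t}$ is supported in $I_j$, so Cauchy--Schwarz on $I_j$ bounds its $L^1(\mu)$-norm by $\mu(I_j)^{1/2}$ times its $L^2(\mu)$-norm; since the Haar functions $h_{J_n^t}$ appearing are mutually orthonormal (the assignment $J\mapsto J_n^t$ being injective) and $\sup_J|\alpha_J|\le 1$, Bessel's inequality, or equivalently the $L^2(\mu)$-boundedness of $T$, bounds that $L^2(\mu)$-norm by $\|a_j\|_{L^2(\mu)}\le\mu(I_j)^{-1/2}$, so the interior part is $\lesssim 1$. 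In the overlap part the constraints $J\subseteq K$ and $J_n^t\supsetneq I_j$ pin $J$ between a fixed ancestor of $I_j$ and $K$, so there are at most $O(k_j-k)$ such $J$; estimating each term crudely by $\|a_j\|_{L^1(\mu)}\,\|h_{J_m^s}\|_{L^\infty(\mu)}\,\|h_{J_n^t}\|_{L^1(\mu)}$, bounding $\|a_j\|_{L^1(\mu)}\le\mu(I_j)^{1/2}\|a_j\|_{L^2(\mu)}\le(k_j-k+1)^{-1}$, and invoking the balanced condition \eqref{eq:balancedDim1} — iterated over the bounded number of generations separating $J_m^s$ and $J_n^t$ — to get $\|h_{J_m^s}\|_{L^\infty(\mu)}\|h_{J_n^t}\|_{L^1(\mu)}\lesssim 1$, the overlap part is $\lesssim (k_j-k+O(1))\cdot(k_j-k+1)^{-1}\lesssim 1$.

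The step I expect to be decisive — and the reason the refined atomic blocks of \cite{CAP}, with their $(k_j-k+1)^{-1}$ normalization, are exactly the right input — is this overlap estimate: an ordinary $L^1(\mu)$ atom would only yield a bound growing like the number $k_j-k$ of intervening generations, and it is precisely the extra decay built into the subatom normalization that cancels that growth. The balanced hypothesis enters only through the uniform bound $\|h_Q\|_{L^\infty(\mu)}\|h_R\|_{L^1(\mu)}\lesssim 1$ for dyadic $Q,R$ within the fixed number of generations dictated by the complexity of $T$, which is where the non-homogeneity of $\mu$ is absorbed; everything else is orthogonality, Cauchy--Schwarz, and elementary counting of dyadic intervals subject to nesting constraints, so beyond the overlap estimate I anticipate no serious obstacle.
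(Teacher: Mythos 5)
Your proposal is correct and follows essentially the same route as the paper: reduce to a single $2$-atomic block via $\cexp_k b=0$, localize to the disjoint ancestors $K$, and prove the single-subatom bound by splitting into the $J_n^t\subseteq I_j$ part (Cauchy--Schwarz plus $L^2$ orthogonality) and the $J_n^t\supsetneq I_j$ part (counting the $O(k_j-k)$ intervening scales, the balanced condition, and the $(k_j-k+1)^{-1}$ subatom normalization). The key mechanism you identify -- the extra decay of the \cite{CAP} subatoms cancelling the linear growth in the number of generations -- is exactly the one the paper exploits.
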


Next, we upgrade this bound to show that such $T$ actually preserves $\Hone(\mu)$ and complete the proof of Theorem \ref{th:theoremA}. As we already said in the Introduction, by the $\Hone(\mu)-\BMO(\mu)$ duality \cite{Weisz}, this provides another proof of Theorem \ref{BMOtoBMO}.

\begin{thm}\label{H1H1}
For all $f \in \Hone(\mu)$, 
\begin{equation}
\|T f\|_{\Hone} \lesssim \|f\|_{\Hone_{\mathrm{atb}}}.
  \label{H1H1Eq}  
\end{equation}

\begin{proof}
We already know from Theorem \ref{H1L1} that $Tf$ is well-defined as an $L^1(\mu)$ function, so it is enough to establish \eqref{H1H1Eq}. This task reduces, by the sub-linearity of the martingale square function, to showing for any atomic block $b$, $T(b)$ also enjoys the square function estimate
\begin{equation}
\|\mathcal{S}[T(b)]\|_{L^1(\mu)} \leq C \,    |b|^{1}_{\text{atb},2}.
\label{H1H1Reduced}\end{equation}
We may write $\mathcal{S}f$ in terms of the Haar basis:
$$ 
\mathcal{S}f(x)= \left( \sum_{I \in \D} |\langle f, h_I \rangle| ^2 |h_I(x)|^2 \right)^{1/2}.
$$
Expanding $T(b)$ with the above expression, we have
\begin{align*}
\mathcal{S} [T(b)] & = \mathcal{S} \left(\sum_{K \in \D_{k-s}} \sum_{J \subseteq K} \alpha_J \langle b, h_{J_m^s} \rangle h_{J_n^t}\right)\\
& = \left(\sum_{K \in \D_{k-s}} \sum_{J \subseteq K} \alpha_J^2 \langle b, h_{J_m^s} \rangle^2 h_{J_n^t}^2\right)^{1/2}
\end{align*}
Since $K \in \D_{k-s}$ are pairwise disjoint, again we have
\begin{align*}
\|\mathcal{S}[T(b)]\|_{L^1(\mu)} & \leq \sum_{K \in \D_{k-s}}\left \| \left( \sum_{J \subseteq K} \alpha_J^2 \langle b, h_{J_m^s} \rangle^2 h_{J_n^t}^2\right )^{1/2}  \right\|_{L^1(\mu)}\\
& = \sum_{K \in \D_{k-s}}\left \| \left(\sum_{J \subseteq K}  \alpha_J^2 \left\langle \sum_{j: I_j \subseteq K} \lambda_j  a_j, h_{J_m^s} \right\rangle^2 h_{J_n^t}^2 \right)^{1/2}\right \|_{L^1(\mu)} \\
& \leq \sum_{K \in \D_{k-s}} \sum_{j: I_j \subseteq K} |\lambda_j| \left \| \left(\sum_{J \subseteq K} \alpha_J^2  \langle a_j, h_{J_m^s} \rangle^2 h_{J_n^t}^2 \right)^{1/2}\right \|_{L^1(\mu)} 
\end{align*}
and as before, it suffices to show 

$$ \left \| \left(\sum_{J \subseteq K} \alpha_J^2  \langle a_j, h_{J_m^s} \rangle^2 h_{J_n^t}^2 \right)^{1/2}\right \|_{L^1(\mu)} \lesssim 1     $$
We again split, using sub-additivity of the square root function:
\begin{align*}
\left \|\left( \sum_{J \subseteq K} \alpha_J^2  \langle a_j, h_{J_m^s} \rangle^2 h_{J_n^t}^2 \right)^{1/2} \right \|_{L^1(\mu)}  & \leq \left \|\left( \sum_{\substack{J \subseteq K:\\ J_n^t \subseteq I_j}} \alpha_J^2 \langle a_j, h_{J_m^s}^2 \rangle h_{J_n^t}^2\right)^{1/2}\right \|_{L^1(\mu)} \\
& + \left \| \left( \sum_{\substack{J \subseteq K:\\ J_n^t \supsetneq I_j}} \alpha_J^2  \langle a_j, h_{J_m^s} \rangle^2 h_{J_n^t}^2 \right)^{1/2}\right \|_{L^1(\mu)}.
\end{align*}
Each term can be controlled by the same procedure that is applied in Theorem \ref{H1L1}. In particular, the first term on the display above can be estimated using the Cauchy-Schwarz inequality and $L^2$-orthogonality, while the second contains a finite summation and the $L^1$ norm of each individual term (namely, $\|\alpha_J \langle a_j, h_{J_m^s} \rangle h_{J_n^t}\|_{L^1(\mu)}$) can be uniformly controlled using \eqref{eq:balancedDim1}. We omit the precise details as they are so similar to what was done before. 
\end{proof}

\end{thm}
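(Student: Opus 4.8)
The plan is to follow the same scaffolding used in the proof of Theorem~\ref{H1L1}: reduce the $\Hone$-bound for $T$ to a uniform estimate on a single $2$-subatom $a_j$ supported on $I_j$, then estimate the square function $\mathcal{S}[T(a_j)]$ by splitting the Haar frequencies $J\subseteq K$ into the ``local'' part $J_n^t\subseteq I_j$ and the ``tail'' part $J_n^t\supsetneq I_j$. First I would note that, by sublinearity of $\mathcal{S}$ and of the passage from atomic blocks to their subatoms (exactly as in the display chains preceding \eqref{Atom Reduction}), \eqref{H1H1Eq} follows once we prove $\|\mathcal{S}[\sum_{J\subseteq K}\alpha_J\langle a_j,h_{J_m^s}\rangle h_{J_n^t}]\|_{L^1(\mu)}\lesssim 1$ uniformly in $j$ and $K$, where $I_j\subseteq K\in\D_{k-s}$ and $\|a_j\|_{L^2(\mu)}\le \mu(I_j)^{-1/2}(k_j-k+1)^{-1}$.

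For the local term $\sum_{J\subseteq K,\,J_n^t\subseteq I_j}$, every $h_{J_n^t}$ is supported in $I_j$, so the whole square function is supported in $I_j$ and one can apply Cauchy--Schwarz over $I_j$: $\|\mathcal{S}[\cdot]\|_{L^1(\mu)}\le \mu(I_j)^{1/2}\|\mathcal{S}[\cdot]\|_{L^2(\mu)}$. Then the $L^2$ identity for the square function together with $L^2$-orthogonality and $\sup_J|\alpha_J|\le 1$ gives $\|\mathcal{S}[\cdot]\|_{L^2(\mu)}^2=\sum_{J}\alpha_J^2\langle a_j,h_{J_m^s}\rangle^2\le \|a_j\|_{L^2(\mu)}^2$, so the local term is bounded by $\mu(I_j)^{1/2}\|a_j\|_{L^2(\mu)}\le (k_j-k+1)^{-1}\le 1$, exactly as in part $\mathrm{A}$ of Theorem~\ref{H1L1}.

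For the tail term $\sum_{J\subseteq K,\,J_n^t\supsetneq I_j}$, I would repeat the counting argument from part $\mathrm{B}$ of Theorem~\ref{H1L1}: the constraints $J\subseteq K$ and $J_n^t\supsetneq I_j$ force $I_j^{(t)}\subsetneq J\subseteq K$, so there are at most $k_j-k+s$ such $J$, and for each the intervals $J_n^t$ involved are nested (they are ancestors of $I_j$). Pointwise, the square function of a sum of $N$ terms is at most $\sqrt N$ times the max of the individual terms, or more simply one can bound $\mathcal{S}[\sum]\le\sum_J |\alpha_J\langle a_j,h_{J_m^s}\rangle|\,|h_{J_n^t}|$, and then take $L^1(\mu)$ norms term by term. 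Each term obeys $\|\alpha_J\langle a_j,h_{J_m^s}\rangle h_{J_n^t}\|_{L^1(\mu)}\le \|a_j\|_{L^1(\mu)}\|h_{J_m^s}\|_{L^\infty}\|h_{J_n^t}\|_{L^1(\mu)}\lesssim \|a_j\|_{L^1(\mu)}$ by \eqref{eq:balancedDim1}, and $\|a_j\|_{L^1(\mu)}\le \mu(I_j)^{1/2}\|a_j\|_{L^2(\mu)}\le (k_j-k+1)^{-1}$; multiplying by the count $k_j-k+s$ yields $\lesssim (k_j-k+s)/(k_j-k+1)\lesssim 1$.

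The main obstacle, such as it is, is purely bookkeeping: one must be careful that in the tail sum the functions $h_{J_n^t}$ genuinely form a nested (hence essentially ``one at each scale'') family so that the crude triangle-inequality bound on $\mathcal{S}$ loses only a factor comparable to the number of terms, which is already absorbed by the logarithmic gain $(k_j-k+1)^{-1}$ built into the definition of a subatom; and that the local/tail splitting is compatible with the outer sums over $j$ with $I_j\subseteq K$ and over disjoint $K\in\D_{k-s}$, which telescope exactly as in Theorem~\ref{H1L1}. Since every individual estimate has already appeared in the proof of Theorem~\ref{H1L1}, I would present the argument compactly, indicating the two splittings and citing \eqref{eq:balancedDim1} and the $L^2$-orthogonality, and omit the repeated routine computations.
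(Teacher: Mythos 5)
Your proposal is correct and follows essentially the same route as the paper's proof: reduce by sublinearity to a uniform bound on a single $2$-subatom, split the frequencies $J\subseteq K$ according to whether $J_n^t\subseteq I_j$ or $J_n^t\supsetneq I_j$, handle the local part by Cauchy--Schwarz and $L^2$-orthogonality of the square function, and the tail by the counting argument plus the balanced condition \eqref{eq:balancedDim1}, with the factor $k_j-k+s$ absorbed by the $(k_j-k+1)^{-1}$ normalization of the subatom. No substantive differences.
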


\section{Lipschitz Spaces}\label{sec2}

We need the following simple lemma before proving Theorem \ref{th:theoremB}.

\begin{lemma} \label{SibControl}
Let $I$ a dyadic interval and $\mu$ a locally finite Borel measure satisfying 
$$
\mu(I_{-})>0, \quad \mu(I_{+}) \geq \frac{1}{2} \mu(I).
$$
Then for any locally integrable function $f$, we have 
\begin{equation}\label{siblingcontrol} \left| \langle f \rangle_{I_{-}}- \langle f \rangle_{I_{+}}\right| \leq 2 \left| \langle f \rangle_{I_{-}}-\langle f \rangle_{I} \right|.\end{equation}
The analogous result also holds true with the roles of $I_{-}, I_{+}$ reversed.
\begin{proof}
Set $$ \alpha_1= \int_{I_{-}} f \, du, \quad \alpha_2= \int_{I_{+}} f \, du, \quad \alpha=\alpha_1 + \alpha_2= \int_{I} f \, d\mu; $$
$$ \beta_1= \mu(I_{-}), \quad \beta_2= \mu(I_{+}), \quad \beta=\beta_1 + \beta_2= \mu(I). $$
Then \eqref{siblingcontrol} reads

$$ \left| \frac{\alpha_1}{\beta_1}- \frac{\alpha_2}{\beta_2} \right| \leq 2 \left|\frac{\alpha_1}{\beta_1}- \frac{\alpha}{\beta} \right|.   $$
Simple algebra yields the equivalent inequality

$$\left|\frac{\beta}{\beta_2} \right|  \left| \alpha_1 \beta_2-\alpha_2 \beta_1 \right|     \leq 2 \left| \alpha_1 \beta- \alpha \beta_1 \right|  ,$$
and expanding $\alpha,\beta$ this becomes

$$\left|\frac{\beta}{\beta_2} \right|  \left| \alpha_1 \beta_2-\alpha_2 \beta_1 \right|     \leq 2 \left| \alpha_1 \beta_2- \alpha_2 \beta_1 \right|.$$
This inequality clearly holds true if $\left| \alpha_1 \beta_2- \alpha_2 \beta_1 \right|=0$, and otherwise it holds because $|\beta| \leq 2 |\beta_2|.$
\end{proof}

\end{lemma}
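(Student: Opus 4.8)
The plan is to reduce the stated inequality to a single scalar comparison of measures by clearing all denominators. First I would introduce the shorthand $\alpha_1 = \int_{I_-} f\,d\mu$, $\alpha_2 = \int_{I_+} f\,d\mu$, $\alpha = \alpha_1+\alpha_2 = \int_I f\,d\mu$, and $\beta_1 = \mu(I_-)$, $\beta_2 = \mu(I_+)$, $\beta = \beta_1+\beta_2 = \mu(I)$, so that the three averages become $\langle f\rangle_{I_-} = \alpha_1/\beta_1$, $\langle f\rangle_{I_+} = \alpha_2/\beta_2$, and $\langle f\rangle_I = \alpha/\beta$. Note that the hypotheses guarantee all three denominators are strictly positive (since $\beta_1>0$ and $\beta_2 \geq \tfrac12\beta > 0$), so every quotient is well defined.

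Next I would rewrite both sides over a common denominator. The left-hand side is $|\alpha_1\beta_2 - \alpha_2\beta_1|/(\beta_1\beta_2)$, while the right-hand side is $2|\alpha_1\beta - \alpha\beta_1|/(\beta_1\beta)$. The one genuinely useful observation is the algebraic identity
$$
\alpha_1\beta - \alpha\beta_1 = \alpha_1(\beta_1+\beta_2) - (\alpha_1+\alpha_2)\beta_1 = \alpha_1\beta_2 - \alpha_2\beta_1,
$$
which shows that the \emph{same} quantity $\alpha_1\beta_2 - \alpha_2\beta_1$ appears inside the absolute value on both sides. Thus the desired inequality is equivalent to
$$
\frac{|\alpha_1\beta_2-\alpha_2\beta_1|}{\beta_1\beta_2} \leq \frac{2\,|\alpha_1\beta_2-\alpha_2\beta_1|}{\beta_1\beta}.
$$

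From here the conclusion is immediate. If $\alpha_1\beta_2 - \alpha_2\beta_1 = 0$ both sides vanish and the inequality holds trivially. Otherwise I would cancel the common positive factor $|\alpha_1\beta_2-\alpha_2\beta_1|/\beta_1$ from both sides, reducing the claim to $\beta \leq 2\beta_2$, that is, $\mu(I) \leq 2\mu(I_+)$, which is exactly the hypothesis $\mu(I_+) \geq \tfrac12\mu(I)$. The symmetric statement with $I_-$ and $I_+$ interchanged follows by relabeling, since the argument is entirely symmetric in the two children once the corresponding measure hypothesis is assumed.

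There is no real obstacle here; the only point requiring a moment's care is recognizing the cross-multiplication identity $\alpha_1\beta - \alpha\beta_1 = \alpha_1\beta_2 - \alpha_2\beta_1$, which is what makes the common factor cancel and turns the inequality into the bare measure comparison $\beta \leq 2\beta_2$. I would make sure to record explicitly that all denominators are positive so that the divisions and cancellations are legitimate.
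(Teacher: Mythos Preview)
Your argument is correct and is essentially identical to the paper's own proof: the paper introduces the same shorthand $\alpha_1,\alpha_2,\alpha,\beta_1,\beta_2,\beta$, clears denominators, uses the same identity $\alpha_1\beta-\alpha\beta_1=\alpha_1\beta_2-\alpha_2\beta_1$, and reduces to the scalar inequality $\beta\leq 2\beta_2$.
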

% In what follows, for simplicity we consider the model dyadic operator 
% $$Hf:= \sum_{I \in \D} \langle f, h_I \rangle h_{I_{-}} ,$$
% but the same result is true for any generic Haar shift as considered in the previous section. As before, we have the underyling assumption that $\mu$ is a balanced measure. 

%\color{red} Do we need to worry about formally defining action of $T$ on these Lipschitz spaces? Maybe it's okay because we can define action as on BMO. \color{black}
\begin{proof}[Proof of Theorem \ref{th:theoremB}]
Since $\mu$ is balanced, $T$ is bounded on $L^q(\mu)$. Let $f \in  \Lip$ and fix an interval $I \in \D.$ We need to control the expression
 
 $$
 \frac{1}{\mu(I)^{1/q+\alpha}} \left( \int_I |Tf- \langle Tf \rangle_{\widehat{I}}|^q \, d\mu \right)^{1/q}.
$$ 
We follow an argument very similar to Theorem \ref{BMOtoBMO}. We have the exact same expansion of the integrand:
\begin{align*}
(Tf- \langle Tf \rangle_{\widehat{I}}) \one_I & = \left(\sum_{J \subseteq \widehat{I}} \langle Tf, h_J \rangle h_J \right) \one_I \\
& = \sum_{K \subseteq I} \alpha_K \langle g, h_{K_m^s} \rangle h_{K_n^t}+ \sum_{\substack{K \supsetneq I:\\ K_n^t \subseteq \widehat{I}}} \alpha_K \langle f, h_{K_m^s} \rangle h_{K_n^t} \one_I\\
& := F_1 + F_2,
\end{align*}
where $g= (f- \langle f \rangle_{\widehat{I}}) \one_I.$ Notice that $F_1= T_I g$, where the Haar shift
 $$ T_I f:= \sum_{K \subseteq I} \alpha_K \langle f, h_{K_m^s} \rangle h_{K_n^t}.$$
There exists a universal constant $C_q$, independent of the dyadic interval $I$, so that for any such Haar shift $T$ with uniformly bounded coefficients, $$\|T_I\|_{L^q(\mu) \rightarrow L^q(\mu)} \leq C_q.$$
The balanced condition on $\mu$ is needed in this step. Therefore, using the $L^q$ bound on $T_I,$ we get 

 \begin{align*}
 \frac{1}{\mu(I)^{1/q+\alpha}} \left( \int_I |F_1|^q \, d\mu \right)^{1/q} & = \frac{1}{\mu(I)^{1/q+\alpha}} \left( \int_I |T_I g|^q \, d\mu \right)^{1/q} \\
 & \lesssim \frac{1}{\mu(I)^{1/q+\alpha}} \left( \int_I | g|^q \, d\mu \right)^{1/q}\\
 & =  \frac{1}{\mu(I)^{1/q+\alpha}} \left( \int_I |f- \langle f \rangle_{\widehat{I}}|^q \, d\mu \right)^{1/q}\\
 & \leq \|f\|_{\Lip}.
 \end{align*}
On the other hand, the sum defining $F_2$ is finite as before, and it suffices to show 

$$\frac{1}{\mu(I)^\alpha} \sup_{{\substack{K \supsetneq I:\\ K_n^t \subseteq \widehat{I}}}} \|\alpha_K \langle f, h_{K_m^s} \rangle h_{K_n^t}\|_{L^\infty} \lesssim \|f\|_{\Lip}.$$
Fix such a $K$. Then by \eqref{eq:balancedDim1} and the relationship between $K$ and $I$, $m(I) \sim m(K_m^s)$ and we may assume without loss of generality $m(K_m^s) \sim \mu({K_m^s}_{-})$. Then, we estimate using Lemma \ref{SibControl}:

\begin{align*}
\frac{1}{\mu(I)^\alpha} \|\alpha_K \langle f, h_{K_m^s} \rangle h_{K_n^t}\|_{L^\infty} & \lesssim  \frac{1}{\mu(I)^\alpha} \left| \langle f \rangle_{{K_m^s}_{-}} -\langle f \rangle_{{K_m^s}_{+}}  \right|\\
& \leq\frac{1}{m(I)^\alpha} \left| \langle f \rangle_{{K_m^s}_{-}} -\langle f \rangle_{{K_m^s}_{+}}  \right|\\
&  \sim \frac{1}{m(K_m^s)^\alpha} \left| \langle f \rangle_{{K_m^s}_{-}} -\langle f \rangle_{{K_m^s}_{+}}  \right| \\
& \sim \frac{1}{\mu({K_m^s}_{-})^\alpha} \left| \langle f \rangle_{{K_m^s}_{-}} -\langle f \rangle_{{K_m^s}_{+}}  \right|\\
& \lesssim \frac{1}{\mu({K_m^s}_{-})^\alpha} \left| \langle f \rangle_{{K_m^s}_{-}} -\langle f \rangle_{{K_m^s}}  \right|\\
& \lesssim \|f\|_{\Lip}.
\end{align*}
\end{proof}

As we mentioned in the Introduction, one might wonder if the balanced condition is really necessary and if one can actually get a result for a generic Borel measure which parallels the continuous case in \cite{GG}. We prove below that the hypothesis on the measure is sharp and that the Lipschitz norm actually characterizes \eqref{eq:balancedDim1}. We take $q=2$ below for convenience. 

\begin{prop} \label{LipshitzCounter}
Let $\mu$ a locally finite positive Borel measure satisfying $\mu(I)>0$ for all $I \in \D.$ Then if all dyadic shifts $T$ are uniformly bounded on $\Lambda_2(\alpha)$, $\mu$ must be balanced. The same is true if we replace $\Lambda_2(\alpha)$ by $\BMO(\mu)$.

\begin{proof} 
Let $T$ be the dyadic shift $Tf:= \sum_{I \in \D} \langle f, h_I \rangle h_{I_{-}}.$ Then for any fixed dyadic $I$, $T h_I= h_{I_{-}}$, and we need to compute $\|h_I\|_{\Lambda_{2}(\alpha)}$ (and at the same time $\|h_{I_{-}}\|_{\Lambda_{2}(\alpha)}$). Note if $J \subsetneq I_{-}$ or $J \subsetneq I_{+}$, the Lipschitz oscillation ($L^2$ average of $|f-\langle f \rangle_{\widehat{J}}|$) over $J$ is equal to 0. If $J \supseteq I$, it is easy to see that by the $L^2$ normalization and mean zero property of $h_I$ that 
$$\frac{1}{\mu(J)^{1/2+\alpha}} \left( \int_J |h_I- \langle h_I \rangle_{\widehat{J}}|^2 \, d\mu \right)^{1/2}= \frac{1}{\mu(J)^{1/2+\alpha}}. $$
The only remaining intervals to test on are $I_{-}$ and $I_{+}$. In either case, it is easy to check that we get, respectively 

$$\frac{1}{\mu(I_{\pm})^{1/2+\alpha}} \left( \int_{I_{\pm}} |h_I- \langle h_I \rangle_{I}|^2 \, d\mu \right)^{1/2}= \frac{m(I)^{1/2}}{\mu(I_{\pm})^{1+\alpha}}$$
Collectively, we have shown $\|h_I\|_{\Lambda_{2}(\alpha)} \sim m(I)^{-\alpha-1/2}.$ Then, the uniform boundedness condition imposes $\|h_{I_{-}}\|_{\Lambda_{2}(\alpha)} \lesssim \|h_I\|_{\Lambda_{2}(\alpha)}$, which further implies

$$m(I_{-})^{-\alpha-1/2} \lesssim m(I)^{-\alpha-1/2}, \text{ or } \quad m(I) \lesssim m(I_{-}).     $$
It is then obvious that we can also obtain the inequalities $m(I) \lesssim m(I_{+})$ and $m(I) \lesssim m(\widehat{I})$ by testing on analogous shifts. This produces the balanced condition. To prove the result for $\BMO(\mu)$, one can just repeat the argument above with $\alpha=0.$
\end{proof}

\begin{rem}
Actually, the proof shows that we could weaken the condition that \emph{all} Haar shifts are uniformly bounded to merely assuming that the classical dyadic Hilbert transform
$$ Hf:=  \sum_{I \in \D} \langle f, h_I \rangle (h_{I_{-}}-h_{I_{+}})$$ and its adjoint are both bounded. 

\end{rem}

\end{prop}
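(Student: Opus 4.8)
The plan is to exploit the uniform boundedness by testing it on single Haar functions, whose martingale Lipschitz norms can be computed explicitly. The heart of the argument is the equivalence
$$
\|h_I\|_{\Lambda_2(\alpha)}\sim m(I)^{-1/2-\alpha},\qquad I\in\D,
$$
where $m(I)=\min\{\mu(I_-),\mu(I_+)\}$. To prove it I would run through the oscillation $\mu(J)^{-1/2-\alpha}\big(\int_J|h_I-\langle h_I\rangle_{\widehat J}|^2\,d\mu\big)^{1/2}$ according to the position of $J$. By nestedness the only test intervals $J$ giving a nonzero value are $J\supseteq I$ and $J\in\{I_-,I_+\}$; for all other $J$ the oscillation vanishes, since there $h_I$ is constant on $\widehat J$, or $h_I$ vanishes on $J$ while $\langle h_I\rangle_{\widehat J}=0$. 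For $J\supseteq I$ the mean-zero property gives $\langle h_I\rangle_{\widehat J}=0$, so the value is $\mu(J)^{-1/2-\alpha}\le\mu(I)^{-1/2-\alpha}\le m(I)^{-1/2-\alpha}$. For $J=I_\pm$ the explicit formula for $h_I$ gives the value $\mu(I_\mp)^{1/2}\mu(I)^{-1/2}\mu(I_\pm)^{-1/2-\alpha}$, which is always $\le\mu(I_\pm)^{-1/2-\alpha}\le m(I)^{-1/2-\alpha}$, and which, when $I_\pm$ is the child of smaller $\mu$-measure, is comparable to $m(I)^{-1/2-\alpha}$ (because then $\mu(I_\mp)\sim\mu(I)$). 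Taking the supremum over $J$ proves the equivalence.

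Granting this, the balanced condition \eqref{eq:balancedDim1} drops out of a few test shifts. The dyadic shift $T_-f=\sum_I\langle f,h_I\rangle h_{I_-}$ satisfies $T_-h_I=h_{I_-}$, so uniform boundedness together with the norm computation forces $m(I_-)^{-1/2-\alpha}\lesssim m(I)^{-1/2-\alpha}$, that is $m(I)\lesssim m(I_-)$; symmetrically $T_+f=\sum_I\langle f,h_I\rangle h_{I_+}$ gives $m(I)\lesssim m(I_+)$. Since $\widehat{I_-}=\widehat{I_+}=I$ and every dyadic interval is one of the two children of its parent, these say $m(\widehat J)\lesssim m(J)$ for all $J\in\D$. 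For the reverse bound I would use the adjoint-type shifts $S_\pm f=\sum_I\langle f,h_{I_\pm}\rangle h_I$; injectivity of $I\mapsto I_\pm$ makes $\{h_{I_\pm}\}_{I\in\D}$ orthonormal, so $S_\pm h_{I_\pm}=h_I$, and boundedness now gives $m(I)^{-1/2-\alpha}\lesssim m(I_\pm)^{-1/2-\alpha}$, that is $m(J)\lesssim m(\widehat J)$ for all $J$. Combining, $m(J)\sim m(\widehat J)$ for every $J\in\D$, which is exactly \eqref{eq:balancedDim1}, so $\mu$ is balanced.

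The $\BMO(\mu)$ statement follows by repeating the argument with $\alpha=0$: since $\Lambda_2(0)\simeq\BMO(\mu)$ by the John--Nirenberg inequality, one obtains $\|h_I\|_{\BMO}\sim m(I)^{-1/2}$, and the deduction above goes through verbatim.

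The only step I expect to require genuine care is the norm computation $\|h_I\|_{\Lambda_2(\alpha)}\sim m(I)^{-1/2-\alpha}$ — specifically, confirming that the supremum defining the seminorm is realized, up to a fixed constant, on the child of $I$ of smaller measure, and that neither the ancestors of $I$ nor the larger child contribute more. This is elementary but depends on the $L^2(\mu)$-normalization of $h_I$ and on keeping track of which child has smaller measure; once it is settled, everything afterwards is routine bookkeeping with the orthonormality of the Haar system.
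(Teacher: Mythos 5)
Your proposal is correct and follows essentially the same route as the paper: compute $\|h_I\|_{\Lambda_2(\alpha)}\sim m(I)^{-1/2-\alpha}$ by checking the oscillation over ancestors of $I$ and over its two children, then test the shifts $\sum_I\langle f,h_I\rangle h_{I_\pm}$ and their adjoints on single Haar functions to extract $m(J)\sim m(\widehat{J})$. The only difference is presentational: you spell out the adjoint shifts $S_\pm$ that give the reverse inequality $m(J)\lesssim m(\widehat{J})$, which the paper compresses into ``testing on analogous shifts.''
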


We close this section with a couple of open questions which arise naturally from our investigation of Lipschitz spaces. 

\begin{open}
Do the spaces $\Lip$ depend on the exponent $q$? More generally, is there an analog of the John-Nirenberg inequality for martingale BMO for these Lipschitz spaces?
    
\end{open}

\begin{open}
Does Theorem \ref{th:theoremB} hold with exponent $q=1$? Obviously, if the Lipschitz spaces $\Lip$ are independent of the exponent $q$, then this will be the case. 
 \end{open}

\begin{open}
Identify the pre-dual of Lipschitz spaces $\Lip$, in analogy with the isometric isomorphism $\BMO \simeq \Hone_{atb}(\mu).$
\end{open}

\bibliographystyle{alpha}
\bibliography{references}

\end{document}